\newtheorem{thm}{Theorem}[section]
\newtheorem{cor}[thm]{Corollary}
\theoremstyle{remark}
\newtheorem{rem}[thm]{Remark}
\begin{document}
\title{A Ring Isomorphism and corresponding Pseudoinverses}
\author{Samuel N. Cohen\\ University of Adelaide\\samuel.cohen@adelaide.edu.au
\and Robert J. Elliott\\ University of Adelaide and University of Calgary\\relliott@ucalgary.ca
\and Charles E.M. Pearce\\ University of Adelaide\\charles.pearce@adelaide.edu.au}

\date{\today}
\maketitle

\begin{abstract}
This paper studies the set of $n\times n$ matrices for which all row and column sums equal zero. By representing these matrices in a lower dimensional space, it is shown that this set is closed under addition and multiplication, and furthermore is isomorphic to the set of arbitrary $(n-1)\times (n-1)$ matrices. The Moore-Penrose pseudoinverse corresponds with the true inverse, (when it exists), in this lower dimension and an explicit representation of this pseudoinverse in terms of the lower dimensional space is given. This analysis is then extended to non-square matrices with all row or all column sums equal to zero.
\end{abstract}

\section{Introduction}
In various areas of mathematics, one comes across matrices where all row and column sums are constrained to equal zero. Examples arise in coding theory, stochastic processes, statistics and many other areas where linear algebra is used. Particular examples are the rate matrices of doubly stochastic, continuous time, finite-state Markov chains, the quadratic covariation matrices of any finite-state Markov chain (for example \cite{Cohen2008}) and the `Laplacian matrices' of graph theory.  One problem which arises when studying these matrices is that their inverses do not exist: the vector with all elements one is an eigenvector with corresponding eigenvalue zero, so the determinant of these matrices will be zero. This supports reference to alternative definitions of the inverse, the most common being the Moore-Penrose (pseudo-)inverse. 

The Moore-Penrose inverse has interesting properties in this context, which are derived in this paper. The primary result is that the set of $n\times n$ matrices for which all row and column sums equal zero is isomorphic to the set of arbitrary $(n-1)\times (n-1)$ matrices, with a modified product operation. Under this isomorphism, the Moore-Penrose inverse maps to the true inverse when it exists, and to a modified Moore-Penrose type inverse when it does not. Using this, explicit formulae can be given for the Moore-Penrose inverse, and the range of the projection operator $X^+X$ explicitly described, for $X$ a rank $n-1$, $n\times n$ matrix with all row and column sums equal to zero.

\section{Preliminary Ideas}
We denote by $M_n$ the set of all $n\times n$ matrices with components lying in a field $F$, which we shall assume to be of characteristic zero. The canonical example which we shall discuss is the real matrices; however, the analysis can easily be extended to the complex matrices. We denote by $S_n$ the subset of matrices for which all row and column sums are equal to zero. For any matrix $A$, a \textit{pseudoinverse} is any matrix $A^+$ which satisfies
\[AA^+A=A.\]
If furthermore $A^+$ satisfies 
\[\begin{split}
A^+AA^+&=A^+\\
A^+A&=(A^+A)^*\\
AA^+&=(AA^+)^*,\end{split}\]
where $[.]^*$ denotes the conjugate transpose, it is said to be the \textit{Moore-Penrose (pseudo)inverse} of $A$. It is well known that the Moore-Penrose inverse of any matrix $A$ exists and is unique (see, for example, \cite{Wong1970}). If $A$ is invertible then $A^+=A^{-1}$ is the only pseudoinverse, and if $A$ has a left- or a right-inverse then this will be the Moore-Penrose inverse. A good general treatment of generalised inverses can be found in \cite{Campbell1980}.

\section{Representation}

\begin{thm} \label{thm:biject}
There is a bijection $\phi$ between $S_{n+1}$ and $M_{n}$, given by the map
\[ \phi: M_n\rightarrow S_{n+1}, X \mapsto J_n^* X J_n\]
where \[J_n:=[I_n|-\mathbf{1}],\]
an identity matrix with a column of $-1$'s added to the right hand side.
\end{thm}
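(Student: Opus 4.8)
The plan is to verify that $\phi$ genuinely takes values in $S_{n+1}$ and then to exhibit an explicit two-sided inverse, so that the bijection is witnessed by formulae that can be reused later in the paper. Write $\mathbf 1_m$ for the column vector of $m$ ones, so that the $\mathbf 1$ appearing in $J_n=[\,I_n\,|\,-\mathbf 1\,]$ is $\mathbf 1_n$. Everything hinges on the single identity
\[J_n\mathbf 1_{n+1}=I_n\mathbf 1_n-\mathbf 1_n=0,\qquad\text{whence also}\qquad\mathbf 1_{n+1}^{*}J_n^{*}=(J_n\mathbf 1_{n+1})^{*}=0.\]
Since a square matrix $A$ of size $m$ lies in $S_m$ precisely when $A\mathbf 1_m=0$ and $\mathbf 1_m^{*}A=0$, for any $X\in M_n$ we obtain $\phi(X)\mathbf 1_{n+1}=J_n^{*}X(J_n\mathbf 1_{n+1})=0$ and $\mathbf 1_{n+1}^{*}\phi(X)=(\mathbf 1_{n+1}^{*}J_n^{*})XJ_n=0$; thus $\phi$ is a well-defined (and plainly linear) map $M_n\to S_{n+1}$.

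Next I would record the auxiliary identity $J_nJ_n^{*}=I_n+\mathbf 1_n\mathbf 1_n^{*}$, whose determinant equals $n+1$ and hence is a unit since $F$ has characteristic zero; set $G:=(J_nJ_n^{*})^{-1}$, a Hermitian matrix, and define $\psi:S_{n+1}\to M_n$ by $\psi(A):=GJ_n\,A\,J_n^{*}G$. Using associativity together with $GJ_nJ_n^{*}=J_nJ_n^{*}G=I_n$, for every $X\in M_n$ one gets
\[\psi(\phi(X))=GJ_n(J_n^{*}XJ_n)J_n^{*}G=(GJ_nJ_n^{*})\,X\,(J_nJ_n^{*}G)=X,\]
so $\psi\circ\phi=\mathrm{id}_{M_n}$; in particular $\phi$ is injective.

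To finish I would show $\phi$ is onto $S_{n+1}$ by checking $\phi(\psi(A))=A$ for each $A\in S_{n+1}$. Expanding, $\phi(\psi(A))=J_n^{*}(GJ_nAJ_n^{*}G)J_n=PAP$ where $P:=J_n^{*}GJ_n$, and one verifies $P^{*}=P$, $P^{2}=J_n^{*}G(J_nJ_n^{*})GJ_n=J_n^{*}GJ_n=P$, and $P\mathbf 1_{n+1}=J_n^{*}G(J_n\mathbf 1_{n+1})=0$; moreover $\operatorname{rank}P=n$ since $\operatorname{rank}P\le n$ while $J_nPJ_n^{*}=J_nJ_n^{*}$ is invertible. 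A Hermitian idempotent of rank $n$ in dimension $n+1$ with $\mathbf 1_{n+1}$ in its kernel must be the orthogonal projection onto $\mathbf 1_{n+1}^{\perp}$, i.e. $P=I_{n+1}-\tfrac{1}{n+1}\mathbf 1_{n+1}\mathbf 1_{n+1}^{*}$. For $A\in S_{n+1}$ this gives $PA=A-\tfrac{1}{n+1}\mathbf 1_{n+1}(\mathbf 1_{n+1}^{*}A)=A$ and symmetrically $AP=A$, so $\phi(\psi(A))=PAP=A$. Combined with the previous paragraph, $\phi$ is a bijection $M_n\to S_{n+1}$ with two-sided inverse $\psi$.

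The only step carrying real content is the invertibility of $J_nJ_n^{*}$ — equivalently $\mathbf 1_n^{*}\mathbf 1_n=n\neq-1$ in $F$ — which is exactly where the characteristic-zero hypothesis enters; the rest is bookkeeping around $J_n\mathbf 1_{n+1}=0$. (Over $\mathbb R$ or $\mathbb C$ one could alternatively note that the injective linear $\phi$ must be surjective because $\dim S_{n+1}=(n+1)^{2}-(2n+1)=n^{2}=\dim M_n$; but the explicit $\psi$ is worth keeping, since $GJ_n$ and $J_n^{*}G$ turn out to be the Moore--Penrose inverses of $J_n^{*}$ and $J_n$, which foreshadows the pseudoinverse formulae to come.)
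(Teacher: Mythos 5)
Your proof is correct, but it takes a different route from the paper's. The paper proves surjectivity by a direct structural observation: for $\tilde X\in S_{n+1}$ the last row and column are forced by the zero-sum constraints, so writing $X$ for the top-left $n\times n$ block one reads off the block identity $\tilde X=\left[\begin{smallmatrix}X & -X\mathbf 1\\ -\mathbf 1^*X & \mathbf 1^*X\mathbf 1\end{smallmatrix}\right]=J_n^*XJ_n$; injectivity is then dispatched in one line from the right-invertibility of $J_n$. You instead manufacture an explicit two-sided inverse $\psi(A)=K_n^{-1}J_nAJ_n^*K_n^{-1}$ (in the paper's notation $K_n=J_nJ_n^*$) and verify both compositions, the harder direction going through the identification of $P=J_n^*K_n^{-1}J_n$ as the symmetric idempotent $I_{n+1}-\tfrac{1}{n+1}\mathbf 1\mathbf 1^*$ annihilating $\mathbf 1$. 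The paper's argument is shorter and more elementary (the inverse map is literally ``delete the last row and column''), whereas yours is more computational but pays dividends: the matrix $P$ you compute is exactly the multiplicative identity $\phi(K_n^{-1})$ of the ring $(S_{n+1},+,\times)$ that the paper only identifies two theorems later, and your observation that $J_n^*K_n^{-1}$ is the Moore--Penrose inverse of $J_n$ anticipates the pseudoinverse formulae of the later sections. One small caution: your appeal to ``a Hermitian idempotent of rank $n$ with $\mathbf 1$ in its kernel is the orthogonal projection onto $\mathbf 1^\perp$'' is an analytic-flavoured step that is cleanest over $\mathbb R$ or $\mathbb C$; over a general field of characteristic zero it still holds (since $\mathbf 1^*\mathbf 1=n+1\neq 0$ the decomposition $F^{n+1}=\mathrm{span}(\mathbf 1)\oplus\mathbf 1^\perp$ is valid), but you should say so, or bypass it entirely by noting that $I-P$ is a rank-one idempotent with range $\mathrm{span}(\mathbf 1)$ and $\mathbf 1^*(I-P)=\mathbf 1^*$, whence $(I-P)A=0$ for $A\in S_{n+1}$ directly.
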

\begin{proof}
Consider an element $\tilde X$ of $S_{n+1}$. As all the row sums are zero, that is, for any $i$, $\sum_je_j^*\tilde Xe_i=0$, we know that \[e_{n+1}^*\tilde Xe_i = -\sum_{j=1}^n e_j^*\tilde Xe_i.\]  Similarly \[e_{i}^*\tilde Xe_{n+1} = -\sum_{j=1}^n e_i^*\tilde Xe_j\] for all $i$.
Therefore, if $X\in M_n$ is the submatrix of $\tilde X$ excluding the $(n+1)$th row and column, we know that 
\[e_{i}^*\tilde Xe_{n+1} = -\sum_{j=1}^n e_i^*\tilde Xe_j=-e_i^*X\mathbf{1},\]
and similarly for the rows of $\tilde X$.

Therefore, for any $\tilde X\in S_n$, we have the relationship 
\[\tilde X = \left[\begin{array}{c|c} X & -X\mathbf{1}\\\hline-\mathbf{1}^*X & \mathbf{1}X\mathbf{1}\end{array}\right]=J_n^* X J_n=\phi(X).\]
 This establishes the surjectivity of $\phi$. Injectivity follows from the fact $J_n$ is right-invertible.
\end{proof}
\begin{cor} If $\tilde X = J_n^* X J_n$, then $\tilde X$ and $X$ have the same rank.\end{cor}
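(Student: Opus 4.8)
The plan is to exploit the fact, already used in the proof of Theorem~\ref{thm:biject}, that $J_n$ has full row rank and is therefore right-invertible; equivalently, $J_n^*$ has full column rank and is left-invertible. Concretely, let $R_n := \left[\begin{array}{c} I_n \\ \mathbf{0}^* \end{array}\right]$ be the $(n+1)\times n$ matrix obtained by appending a zero row to $I_n$. Then one checks at once that $J_n R_n = I_n$ and hence also $R_n^* J_n^* = (J_n R_n)^* = I_n$.

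First I would recall the elementary inequality $\operatorname{rank}(AB)\le\min\{\operatorname{rank}(A),\operatorname{rank}(B)\}$, valid for any conformable matrices over a field. Applying it twice to $\tilde X = J_n^* X J_n$ gives immediately $\operatorname{rank}(\tilde X)\le\operatorname{rank}(X)$. For the reverse inequality I would use $R_n$ to recover $X$ from $\tilde X$: since $R_n^* J_n^* = I_n$ and $J_n R_n = I_n$,
\[ R_n^* \tilde X R_n = R_n^* J_n^* X J_n R_n = (R_n^* J_n^*)\, X\, (J_n R_n) = X,\]
so the same rank inequality yields $\operatorname{rank}(X)\le\operatorname{rank}(\tilde X)$. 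Combining the two inequalities gives $\operatorname{rank}(\tilde X)=\operatorname{rank}(X)$.

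There is essentially no obstacle here; the only thing to watch is the bookkeeping of conjugate transposes and the dimensions ($J_n$ is $n\times(n+1)$, so $J_n^*$ and $R_n$ are $(n+1)\times n$). One may phrase the argument more conceptually if preferred: left multiplication by a matrix of full column rank and right multiplication by a matrix of full row rank are each rank-preserving, so passing $X\mapsto J_n^* X\mapsto J_n^* X J_n$ leaves the rank unchanged at each stage. Either formulation is a few lines.
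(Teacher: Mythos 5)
Your proof is correct and follows the same route as the paper, which simply observes that $J_n$ is right-invertible and hence the transformation loses no rank; you have just made that one-line argument explicit by exhibiting the right inverse $R_n$ and applying $\operatorname{rank}(AB)\le\min\{\operatorname{rank}(A),\operatorname{rank}(B)\}$ in both directions. Nothing is missing, and your version is in fact more complete than the paper's.
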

\begin{proof}
$J_n$ is right invertible, and so no degrees of freedom are lost under this transformation. 
\end{proof}
\begin{cor} $\tilde X = \tilde X^*$ if and only if $X = X^*$.\end{cor}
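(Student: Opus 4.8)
The plan is to exploit the fact that conjugate transposition interacts cleanly with the sandwich $J_n^*(\cdot)J_n$, together with the injectivity of $\phi$ already established in Theorem~\ref{thm:biject}. The single computational observation I would record first is that for any $X\in M_n$,
\[\phi(X)^* = (J_n^* X J_n)^* = J_n^* X^* (J_n^*)^* = J_n^* X^* J_n = \phi(X^*),\]
using only the standard identities $(ABC)^*=C^*B^*A^*$ and $(J_n^*)^* = J_n$. In other words, $\phi$ intertwines the conjugate-transpose operation on $M_n$ with that on $S_{n+1}$.

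With this in hand the corollary is immediate in both directions. For the forward implication, suppose $X = X^*$; then $\tilde X^* = \phi(X)^* = \phi(X^*) = \phi(X) = \tilde X$. For the converse, suppose $\tilde X = \tilde X^*$; then $\phi(X) = \tilde X = \tilde X^* = \phi(X)^* = \phi(X^*)$, and since $\phi$ is injective (Theorem~\ref{thm:biject}, via the right-invertibility of $J_n$), we conclude $X = X^*$.

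There is no real obstacle here: the only point that requires a moment's care is the converse direction, where one must resist the temptation to ``read off'' $X = X^*$ directly from $\tilde X = \tilde X^*$ by comparing the top-left $n\times n$ blocks — that comparison does work, since the top-left block of $\tilde X$ is exactly $X$, but it is cleaner and more in keeping with the paper's viewpoint to invoke injectivity of $\phi$. Either route closes the argument.
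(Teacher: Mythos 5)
Your proposal is correct and follows essentially the same route as the paper: the identity $(J_n^* X J_n)^* = J_n^* X^* J_n$ combined with the injectivity of $\phi$ coming from the right-invertibility of $J_n$. You merely spell out the two directions more explicitly than the paper does.
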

\begin{proof}It is clear that
\[\tilde X ^* = (J_n^* X J_n)^* = J_n^* X^* J_n,\]
the result follows as $J_n$ is right invertible.
\end{proof}

\begin{rem}It is clear that the choice of placing the column of $-1$'s on the right hand side of $J_n$ is arbitrary, and that this column can equivalently be placed elsewhere. This will not change the value of $J_n J_n^*$, and so the following analysis will be unchanged. Similarly a set of matrices with other linear dependencies could be considered, with the following holding \textit{mutatis mutandis}.
\end{rem}

\begin{thm} \label{thm:Isom}
If $\times$ denotes standard matrix multiplication and $\circ$ denotes the `twisted' product $X\circ Y=XK_nY$, $K_n:= J_nJ_n^*$, where $X$ and $Y$ are in $M_n$, then  $\phi$ is an isomorphism between the rings $(S_{n+1}, +, \times)$ and $(M_n, +, \circ)$. 
\end{thm}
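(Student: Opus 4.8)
The plan is to build on Theorem~\ref{thm:biject}, which already supplies that $\phi$ is a bijection, so that all that remains is to check that $\phi$ carries the two ring operations on $M_n$ to the two on $S_{n+1}$, and — if one insists that the word ``ring'' includes closure and associativity — to confirm that $(S_{n+1},+,\times)$ and $(M_n,+,\circ)$ genuinely are rings. Additivity of $\phi$ is immediate, since $\phi(X)=J_n^*XJ_n$ is $F$-linear in $X$; hence $\phi(X+Y)=\phi(X)+\phi(Y)$ for all $X,Y\in M_n$.

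For multiplicative compatibility, the definition $X\circ Y=XK_nY$ with $K_n=J_nJ_n^*$ is rigged precisely so that
\[
\phi(X\circ Y)=J_n^*\bigl(XK_nY\bigr)J_n=J_n^*X\,(J_nJ_n^*)\,Y J_n=(J_n^*XJ_n)(J_n^*YJ_n)=\phi(X)\,\phi(Y).
\]
So the whole multiplicative statement collapses to a single line once $K_n$ is in hand; the only real idea in the proof is recognising that inserting the factor $K_n=J_nJ_n^*$ is exactly what supplies the ``missing'' $J_nJ_n^*$ sitting between the two images. I expect no substantive obstacle here.

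It then remains to note that both structures are rings. On the $M_n$ side this is automatic: $\circ$ is associative because $(XK_nY)K_nZ=XK_n(YK_nZ)$, and it distributes over $+$ on each side because ordinary matrix multiplication does, so $(M_n,+,\circ)$ is a ring — with unit $K_n^{-1}$, using that $K_n=I_n+\mathbf{1}\mathbf{1}^*$ is invertible, should one want unital rings. On the $S_{n+1}$ side, closure under addition is clear, and closure under $\times$ follows either directly (if $\tilde X\mathbf{1}=\mathbf{0}$ and $\mathbf{1}^*\tilde Y=\mathbf{0}$ then $\tilde X\tilde Y\mathbf{1}=\mathbf{0}$ and $\mathbf{1}^*\tilde X\tilde Y=\mathbf{0}$) or, more cheaply, by transporting the ring axioms across the bijection $\phi$ using the two displayed identities. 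Either way $(S_{n+1},+,\times)$ is a ring, $\phi$ is a ring homomorphism, and being bijective it is an isomorphism. The one place calling for a little extra bookkeeping — if one also wants $\phi$ to respect multiplicative identities — is identifying the unit of $(S_{n+1},+,\times)$ as $J_n^*K_n^{-1}J_n$ and verifying that $\phi(K_n^{-1})$ equals it.
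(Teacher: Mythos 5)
Your proof is correct and follows essentially the same route as the paper: the central step in both is inserting $K_n=J_nJ_n^*$ to rewrite $\phi(X)\phi(Y)=J_n^*XJ_nJ_n^*YJ_n$ as $\phi(X\circ Y)$, with additivity immediate from linearity of $X\mapsto J_n^*XJ_n$. You are somewhat more explicit than the paper about verifying the ring axioms (associativity and distributivity of $\circ$, closure of $S_{n+1}$, and the unit $K_n^{-1}$), which the paper leaves implicit or defers to a later theorem, but this is elaboration rather than a different argument.
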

\begin{proof}
Suppose
\[\begin{split}
\tilde X &= J_n^* X J_n,\\
\tilde Y &= J_n^* Y J_n.\end{split}\]
 We have
\[\tilde X + \tilde Y = J_n^* [X+Y] J_n\] 
and 
\[\tilde X \tilde Y = J_n^* X J_n J_n^* Y J_n = J_n^* X K_n Y J_n,\]
where $K_n := J_nJ_n^*$. It is then clear that $\tilde X \tilde Y$ has a representation $J_n^*(XK_n Y)J_n$. By Theorem \ref{thm:biject}, this implies that $S_n$ is closed under addition and matrix multiplication and hence forms an algebraic ring.
\end{proof}

We now extend our definition of the Moore-Penrose inverse to the ring $(M_n, +, \circ)$ by replacing each instance of matrix multiplication by the $\circ$ operation. For notational clarity, we shall denote a classical inverse $X^{-1}$, a classical Moore-Penrose inverse $X^{+}$, an inverse under $\circ$ by $X^\ominus$ and a Moore-Penrose inverse under $\circ$ by $X^\oplus$. 

\begin{thm} \label{thm:MPIsom}
$\phi$ maps all pseudoinverses under $\times$ to pseudoinverses under $\circ$. In particular it preserves the Moore-Penrose inverse.
\end{thm}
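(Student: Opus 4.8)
The plan is to run everything through the isomorphism of Theorem~\ref{thm:Isom}, adding just one structural observation: $\phi$ intertwines conjugate transposition, since for every $Z\in M_n$
\[\phi(Z)^*=(J_n^*ZJ_n)^*=J_n^*Z^*J_n=\phi(Z^*),\]
which is exactly the computation used to prove the second corollary of Theorem~\ref{thm:biject}. Thus $\phi$ is not merely a ring isomorphism but a $*$-ring isomorphism from $(M_n,+,\circ,[\cdot]^*)$ onto $(S_{n+1},+,\times,[\cdot]^*)$; here one also notes, using that $K_n=J_nJ_n^*$ is Hermitian, that $(X\circ Y)^*=(XK_nY)^*=Y^*K_nX^*=Y^*\circ X^*$, so the former really is a $*$-ring. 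The point is that every defining identity for a pseudoinverse, or for a Moore-Penrose inverse, is built only from $+$, the product, and $[\cdot]^*$, so each such identity transports verbatim across $\phi$.

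For the pseudoinverse claim: if $Y\in M_n$ satisfies $X\circ Y\circ X=X$, then applying $\phi$ and using multiplicativity gives $\phi(X)\,\phi(Y)\,\phi(X)=\phi(X\circ Y\circ X)=\phi(X)$, so $\phi(Y)$ is a pseudoinverse of $\phi(X)$ under ordinary matrix multiplication; since $\phi$ is a bijection the converse holds for any ordinary pseudoinverse of $\phi(X)$ that lies in $S_{n+1}$. For the Moore-Penrose inverse I would write out the four conditions defining $Y=X^\oplus$ in $(M_n,+,\circ)$, namely
\[X\circ Y\circ X=X,\qquad Y\circ X\circ Y=Y,\qquad (X\circ Y)^*=X\circ Y,\qquad (Y\circ X)^*=Y\circ X,\]
apply $\phi$ to each, and use multiplicativity together with $\phi(Z^*)=\phi(Z)^*$ to see that these become precisely the four classical Moore-Penrose conditions for the pair $(\phi(X),\phi(Y))$. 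Because $\phi$ is injective every implication here is reversible, so $Y=X^\oplus$ if and only if $\phi(Y)=\phi(X)^+$. Existence and uniqueness of $X^\oplus$ then come for free from those of the classical $\phi(X)^+$, and the conclusion $\phi(X^\oplus)=\phi(X)^+$ follows.

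The argument is almost entirely bookkeeping across the isomorphism; the only step that uses more than ``$\phi$ is a ring isomorphism'' is the compatibility of $\phi$ with $[\cdot]^*$ and its interaction with the twisted product, specifically the chain $\phi\big((X\circ Y)^*\big)=\phi(X\circ Y)^*=(\phi(X)\phi(Y))^*$, which is where the Hermitian-ness of $K_n$ is implicitly used. I expect that verification to be the main (though modest) obstacle; once it is pinned down, the rest is immediate.
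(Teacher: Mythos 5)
Your proof is correct and is essentially the paper's own argument made explicit: the paper's proof consists of the single line ``we simply check'' that the defining identities transfer under $\phi$, and your observation that $\phi$ is a $*$-ring isomorphism (using $\phi(Z^*)=\phi(Z)^*$ and the Hermitian-ness of $K_n$) is precisely that check, carried out. The only slight overreach is the closing remark that \emph{existence} of $X^\oplus$ comes ``for free'' from the classical $\phi(X)^+$ --- that step needs the separate (true, but unproved here) fact that $\phi(X)^+$ lies in $S_{n+1}$, i.e.\ in the image of $\phi$; it is not needed for the theorem as stated, which only concerns preservation.
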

\begin{proof}
We simply check that $\tilde X^+ = J_n^* X^\oplus J_n$ satisfies the required properties for a (Moore-Penrose) pseudoinverse whenever $X^{\oplus}$ satisfies the requirements under $\circ$, and vice versa.
\end{proof}

Note that $K_n$ has a simple form, namely 
\[K_n =J_nJ_n^*= \left[\begin{array}{cccc}2 &1&\cdots &1\\ 1&2&\cdots&1\\ \vdots &\vdots&\ddots&\vdots\\ 1 & 1 &\cdots & 2\end{array}\right]\in M_n\]
with inverse
\[K_n^{-1} = \frac{1}{n+1}\left[\begin{array}{cccc}n &-1&\cdots &-1\\ -1&n&\cdots&-1\\ \vdots &\vdots&\ddots&\vdots\\ -1 & -1 &\cdots & n\end{array}\right].\]

\begin{thm}
The ring $(S_{n+1}, +, \times)$ has a unique multiplicative identity \[\phi(K_n^{-1})=J_n^* K_n^{-1} J_n = I_{n+1} - \frac{1}{n+1} \mathbf{1}_{(n+1)\times(n+1)},\] where $\mathbf{1}_{(n+1)\times(n+1)}$ here denotes the matrix with all entries $1$. (This differs from the usual matrix identity $I_{n+1}\notin S_{n+1}$.)
\end{thm}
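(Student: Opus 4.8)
The plan is to identify the multiplicative identity inside $(M_n,+,\circ)$ first, where it is immediate, and then transport the answer through the isomorphism $\phi$ of Theorem~\ref{thm:Isom}. In $(M_n,+,\circ)$, a two-sided identity $E$ must satisfy $E\circ X = EK_nX = X$ and $X\circ E = XK_nE = X$ for every $X\in M_n$. Since $K_n$ is invertible (its inverse is displayed above), the choice $E=K_n^{-1}$ works, because $K_n^{-1}K_nX = X = XK_nK_n^{-1}$. Uniqueness is the usual monoid argument: if $E$ and $E'$ are both two-sided identities, then $E = E\circ E' = E'$. As $\phi$ is a ring isomorphism carrying $(M_n,+,\circ)$ onto $(S_{n+1},+,\times)$, it follows that $\phi(K_n^{-1})$ is the unique multiplicative identity of $(S_{n+1},+,\times)$. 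Equivalently, one may verify this directly: for any $\tilde X = J_n^*XJ_n\in S_{n+1}$ we have $\phi(K_n^{-1})\tilde X = J_n^*K_n^{-1}J_nJ_n^*XJ_n = J_n^*K_n^{-1}K_nXJ_n = J_n^*XJ_n = \tilde X$, and symmetrically on the right, since $J_nJ_n^*=K_n$.

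It then remains to compute $J_n^*K_n^{-1}J_n$ explicitly. First rewrite the displayed inverse as $K_n^{-1} = I_n - \frac{1}{n+1}\mathbf{1}_{n\times n}$ (checking the diagonal and off-diagonal entries against the given formula), whence $K_n^{-1}\mathbf{1} = \mathbf{1} - \frac{n}{n+1}\mathbf{1} = \frac{1}{n+1}\mathbf{1}$ and, by symmetry of $K_n^{-1}$, $\mathbf{1}^*K_n^{-1} = \frac{1}{n+1}\mathbf{1}^*$. Writing $J_n = [I_n\,|\,{-}\mathbf{1}]$ and multiplying out the block product gives
\[J_n^*K_n^{-1}J_n = \left[\begin{array}{c|c} K_n^{-1} & -K_n^{-1}\mathbf{1}\\\hline -\mathbf{1}^*K_n^{-1} & \mathbf{1}^*K_n^{-1}\mathbf{1}\end{array}\right] = \left[\begin{array}{c|c} I_n - \frac{1}{n+1}\mathbf{1}_{n\times n} & -\frac{1}{n+1}\mathbf{1}\\\hline -\frac{1}{n+1}\mathbf{1}^* & \frac{n}{n+1}\end{array}\right],\]
using $\mathbf{1}^*\mathbf{1}=n$ for the bottom-right entry. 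Block by block this agrees with $I_{n+1} - \frac{1}{n+1}\mathbf{1}_{(n+1)\times(n+1)}$, whose bottom-right entry is $1-\frac{1}{n+1}=\frac{n}{n+1}$, which establishes the stated formula. For the parenthetical remark, note that each row of $I_{n+1} - \frac{1}{n+1}\mathbf{1}_{(n+1)\times(n+1)}$ sums to $1 - (n+1)\cdot\frac{1}{n+1} = 0$ (and likewise each column), so it indeed lies in $S_{n+1}$, whereas $I_{n+1}$ has row sums equal to $1\neq 0$ and hence $I_{n+1}\notin S_{n+1}$.

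I do not expect a genuine obstacle here; both halves are routine. The only points requiring care are the index bookkeeping (the identity of $M_n$ corresponds under $\phi$ to the identity of $S_{n+1}$, not of $S_n$) and getting the $2\times 2$ block multiplication right — in particular using the symmetry of $K_n^{-1}$ so that the last row and the last column of $\phi(K_n^{-1})$ come out consistent with the claimed symmetric answer.
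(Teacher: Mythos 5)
Your proposal is correct and follows the same route as the paper: identify $K_n^{-1}$ as the unique identity in $(M_n,+,\circ)$ and transport it through the isomorphism $\phi$. You additionally verify the explicit block formula $J_n^*K_n^{-1}J_n = I_{n+1}-\frac{1}{n+1}\mathbf{1}_{(n+1)\times(n+1)}$, which the paper asserts without computation; that computation is accurate.
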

\begin{proof}
Under the operation $\circ$, $K_n^{-1}$ is clearly an identity element and is unique (as $K_n$ has a unique inverse). Hence it is the unique identity in $(M_n, +, \circ)$ and therefore in $(S_{n+1}, +, \times)$ by isomorphism.

This fact could also have been observed directly, as the vector with all entries one is the only vector in the left and right null spaces of every matrix in $S_{n+1}$.
\end{proof}

\section{Pseudoinverses}
We now move to the key result of this analysis -- a representation of the Moore-Penrose pseudoinverses of matrices in $S_{n+1}$. 

\begin{thm}
For any rank $n$ matrix $\tilde X$ in $S_{n+1}$, the Moore-Penrose inverse of $\tilde X$ is the unique pseudoinverse of $\tilde X$ in $S_{n+1}$, and is given by 
\[\tilde X^+=J_n^*K_n^{-1}X^{-1} K_n^{-1}J_n,\]
where $\tilde X = J_n^*X J_n$.
\end{thm}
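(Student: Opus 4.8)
The plan is to push everything through the isomorphism $\phi$ of Theorem~\ref{thm:Isom} and thereby reduce the statement to the essentially trivial fact that an invertible element of a ring has a unique pseudoinverse. First, since $\tilde X$ has rank $n$, the corollary to Theorem~\ref{thm:biject} asserting that $\tilde X$ and $X$ share the same rank tells us $X$ is a rank-$n$ element of $M_n$, hence classically invertible, so $X^{-1}$ exists and the formula even makes sense. Next I would identify the inverse of $X$ in the ring $(M_n, +, \circ)$: since $K_n^{-1}$ is the multiplicative identity of this ring, solving $X\circ Y = XK_nY = K_n^{-1}$ and checking the other side gives $X^{\ominus} = K_n^{-1}X^{-1}K_n^{-1}$.

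I would then record the small lemma that, in $(M_n,+,\circ)$, a $\circ$-invertible element has its $\circ$-inverse as its \emph{only} $\circ$-pseudoinverse: if $X\circ Z\circ X = X$, composing with $X^{\ominus}$ on both sides (legitimate because $\circ$ is associative with identity $K_n^{-1}$, this being a genuine ring) forces $Z = X^{\ominus}$. In particular $X^{\ominus}$ trivially satisfies the four $\circ$-Moore--Penrose conditions, since $X\circ X^{\ominus}$ and $X^{\ominus}\circ X$ both equal the symmetric matrix $K_n^{-1}$, so $X^{\oplus} = X^{\ominus} = K_n^{-1}X^{-1}K_n^{-1}$. Applying Theorem~\ref{thm:MPIsom}, $\phi(X^{\oplus}) = J_n^*K_n^{-1}X^{-1}K_n^{-1}J_n$ is the classical Moore--Penrose inverse of $\tilde X = \phi(X)$, which is the asserted formula; and it lies in $S_{n+1}$ simply because it is in the image of $\phi$.

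For uniqueness within $S_{n+1}$, I would take an arbitrary pseudoinverse $\tilde Z\in S_{n+1}$ of $\tilde X$ and write $\tilde Z = \phi(Z)$ by surjectivity of $\phi$. Using the multiplicativity $\phi(A)\phi(B) = \phi(A\circ B)$ established in the proof of Theorem~\ref{thm:Isom}, the equation $\tilde X\tilde Z\tilde X = \tilde X$ becomes $\phi(X\circ Z\circ X) = \phi(X)$, so injectivity of $\phi$ gives $X\circ Z\circ X = X$; by the lemma of the previous paragraph $Z = X^{\ominus}$, hence $\tilde Z = \tilde X^+$. The only step needing any care is that lemma — that a $\circ$-invertible matrix has a unique $\circ$-pseudoinverse which is automatically its $\circ$-Moore--Penrose inverse — but this is just the familiar ring-theoretic argument transplanted into $(M_n,+,\circ)$; everything else is bookkeeping with the already-established properties of $\phi$.
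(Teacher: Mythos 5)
Your proposal is correct and follows essentially the same route as the paper: identify $X^{\ominus}=K_n^{-1}X^{-1}K_n^{-1}$ as the $\circ$-inverse of $X$ and transport it through $\phi$ via Theorem~\ref{thm:MPIsom}. The only real difference is that you spell out the ring-theoretic uniqueness step (any $\circ$-pseudoinverse of a $\circ$-invertible element equals its $\circ$-inverse), which the paper leaves implicit; that is a welcome bit of extra care rather than a different method.
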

\begin{proof}
In this case, $\tilde X\in S_{n+1}$ is of rank $n$ and has representation $J_n^* X J_n$. It is clear that $X$ is of full rank, and so has a unique inverse in $(M_n, +, \circ)$, that is, a matrix $X^\ominus$ such that \[X\circ X^\ominus = X^\ominus\circ X = K_n^{-1}.\] Inspection shows that this corresponds to the regular inverse of $X$ through the formula \[X^\ominus =K_n^{-1}X^{-1} K_n^{-1}.\] 

As $(M_n, +, \circ)$ and $(S_{n+1}, +, \times)$ are isomorphic (by Theorem \ref{thm:Isom}), and $X^\ominus$ is the (Moore-Penrose) inverse of $X$ in $M_n$, this leads to the Moore-Penrose inverse of $\tilde X$ having a representation \[\tilde X ^+=J_n^*X^\ominus J_n = J_n^*K_n^{-1}X^{-1} K_n^{-1}J_n\] by Theorem \ref{thm:MPIsom}. 
\end{proof}

\begin{cor}
For any rank $n$ matrix $\tilde X$ in $S_{n+1}$, we have
\[\tilde X\tilde X^+ = \tilde X^+\tilde X = J_n^* K_n^{-1} J_n = I_{n+1} - \frac{1}{n+1} \mathbf{1}_{(n+1)\times(n+1)}\]
and hence $\tilde X^+$ acts as a true inverse in $S_{n+1}$.
\end{cor}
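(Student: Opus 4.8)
The plan is to substitute the explicit formula for $\tilde X^+$ from the preceding theorem into the products $\tilde X\tilde X^+$ and $\tilde X^+\tilde X$ and simplify using the single identity $J_nJ_n^*=K_n$. Since $\tilde X$ has rank $n$, the rank-preservation corollary guarantees that the associated $X\in M_n$ is invertible, so the formula $\tilde X^+=J_n^*K_n^{-1}X^{-1}K_n^{-1}J_n$ is available. I would then compute
\[\tilde X\tilde X^+ = J_n^* X J_n J_n^* K_n^{-1} X^{-1} K_n^{-1} J_n = J_n^* X K_n K_n^{-1} X^{-1} K_n^{-1} J_n = J_n^* X X^{-1} K_n^{-1} J_n = J_n^* K_n^{-1} J_n,\]
using $J_nJ_n^*=K_n$ to cancel the interior $K_n$ and $K_n^{-1}$, and then $XX^{-1}=I_n$. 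The computation of $\tilde X^+\tilde X$ is word-for-word the same with the factors in the other order, again collapsing to $J_n^*K_n^{-1}J_n$; so both one-sided products agree.

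Next I would identify $J_n^*K_n^{-1}J_n$ with $I_{n+1}-\frac{1}{n+1}\mathbf{1}_{(n+1)\times(n+1)}$. Rather than redo the block expansion of $[I_n\,|\,-\mathbf{1}]^*K_n^{-1}[I_n\,|\,-\mathbf{1}]$, I would simply note that $J_n^*K_n^{-1}J_n=\phi(K_n^{-1})$, whose explicit form was already established in the theorem on the multiplicative identity of $(S_{n+1},+,\times)$, and quote it from there.

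Finally, for the claim that $\tilde X^+$ "acts as a true inverse", I would recall that $J_n^*K_n^{-1}J_n$ is precisely the (unique) multiplicative identity of the ring $(S_{n+1},+,\times)$. Hence $\tilde X\tilde X^+=\tilde X^+\tilde X$ equals that identity, which is exactly the statement that $\tilde X^+$ is a genuine two-sided inverse of $\tilde X$ inside this ring, notwithstanding the fact that $\tilde X$ is singular as an ordinary $(n+1)\times(n+1)$ matrix. I do not anticipate any real obstacle: the only points needing a moment's care are that $X^{-1}$ exists (immediate from the rank hypothesis via the rank-preservation corollary) and that the substitution $J_nJ_n^*=K_n$ is applied at the correct interior position. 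It is perhaps worth one remark that this is consistent with the general principle that a two-sided inverse in a ring with identity is automatically the unique pseudoinverse and the unique Moore-Penrose inverse, which is what licenses the preceding theorem to speak of \emph{the} pseudoinverse of $\tilde X$ in $S_{n+1}$.
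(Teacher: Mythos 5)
Your proposal is correct and is exactly the computation the paper intends: the paper states this corollary without proof, treating it as an immediate substitution of $\tilde X^+=J_n^*K_n^{-1}X^{-1}K_n^{-1}J_n$ followed by cancellation via $J_nJ_n^*=K_n$, together with the already-established identification of $J_n^*K_n^{-1}J_n=\phi(K_n^{-1})$ with the ring identity $I_{n+1}-\frac{1}{n+1}\mathbf{1}_{(n+1)\times(n+1)}$. Your added observation that a two-sided inverse in this ring is automatically the unique pseudoinverse and Moore--Penrose inverse is a correct and worthwhile clarification of the phrase ``acts as a true inverse.''
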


Now suppose that $\tilde X \in S_{n+1}$ is of rank $<n$, and has representation $J_n^* X J_n$. Clearly $X$ will have multiple pseudoinverses in $(M_n,+,\circ)$, as it is of incomplete rank. To find the Moore-Penrose inverse of $\tilde X$, we wish to find the Moore-Penrose inverse $X^\oplus$ under $\circ$. Expanding the conditions for a pseudoinverse, along with our earlier result for rank $n$ matrices, suggests an inverse of the form $K_n^{-1} X^+ K_n^{-1}$. Unfortunately this does not satisfy the symmetry requirements under $\circ$, (and will give a different pseudoinverse). 

Provided we are dealing with real or complex matrices, we know that $K_n$ is symmetric and positive definite and therefore has a Cholesky decomposition $K_n =k_nk_n^*$, where $k_n$ is an invertible, lower triangular $n\times n$ matrix (in fact, we can use any square matrix $k_n$ with $K_n=k_nk_n^*$). 

\begin{thm}
For any real or complex matrix $\tilde X=J_n^*XJ_n\in S_{n+1}$, the Moore-Penrose inverse of $\tilde X$ lies in $S_{n+1}$, is equivalent to the Moore-Penrose inverse of $X$ in $(M_n, +, \circ)$ and is given by the equations
\[\tilde X^+ = J_n^*(k_n^{-1})^*(k_n^* X k_n)^+ k_n^{-1}J_n\]
and
\[\tilde X^+ = J_n^*\lim_{\delta\downarrow 0}(X^* K_n XK_n + \delta I)^{-1}X^*J_n.\]
\end{thm}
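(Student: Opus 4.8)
The plan is to straighten the twisted product $\circ$ into ordinary matrix multiplication by conjugating with the Cholesky factor $k_n$, transport everything known about the ordinary Moore--Penrose inverse through the resulting isomorphism, and then push the result up to $S_{n+1}$ via $\phi$. First I would introduce the map $\psi\colon M_n\to M_n$, $\psi(X)=k_n^* X k_n$, and check that it is a $*$-ring isomorphism from $(M_n,+,\circ)$ onto $(M_n,+,\times)$: it is bijective since $k_n$ is invertible; it is additive; it is multiplicative because $\psi(X)\psi(Y)=k_n^*X(k_nk_n^*)Yk_n=k_n^*(XK_nY)k_n=\psi(X\circ Y)$; it sends the $\circ$-identity $K_n^{-1}$ to $I$; and it commutes with conjugate transpose, $\psi(X^*)=k_n^*X^*k_n=(k_n^*Xk_n)^*=\psi(X)^*$, using $K_n^*=K_n$ (which also makes $X\mapsto X^*$ a genuine involution of $(M_n,+,\circ)$, so that the extended Moore--Penrose conditions are meaningful). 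Since a $*$-ring isomorphism carries the four Moore--Penrose equations term by term, it carries Moore--Penrose inverses to Moore--Penrose inverses, so $\psi(X^\oplus)=\psi(X)^+=(k_n^*Xk_n)^+$; in particular $X^\oplus$ exists and is unique, and $X^\oplus=(k_n^{-1})^*(k_n^*Xk_n)^+k_n^{-1}$.

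Applying Theorem \ref{thm:MPIsom} (or checking directly that $\phi$ transports the Moore--Penrose conditions under the $\circ$-to-$\times$ correspondence) then gives $\tilde X^+=\phi(X^\oplus)=J_n^*X^\oplus J_n=J_n^*(k_n^{-1})^*(k_n^*Xk_n)^+k_n^{-1}J_n$, which is simultaneously the first asserted formula and a witness that $\tilde X^+\in S_{n+1}$ and that it agrees with the Moore--Penrose inverse of $X$ in $(M_n,+,\circ)$.

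For the second formula I would invoke the Tikhonov limit representation $A^+=\lim_{\delta\downarrow 0}(A^*A+\delta I)^{-1}A^*$ of the ordinary Moore--Penrose inverse, applied to $A=k_n^*Xk_n$, and transport the identity back through $\psi^{-1}$; this is legitimate because $\psi^{-1}$ is linear (hence continuous) and a $*$-ring homomorphism. It converts $A^*A+\delta I$ into $X^*\circ X+\delta K_n^{-1}=X^*K_nX+\delta K_n^{-1}$, the ordinary inverse into the $\circ$-inverse $(X^*\circ X+\delta K_n^{-1})^{\ominus}=K_n^{-1}(X^*K_nX+\delta K_n^{-1})^{-1}K_n^{-1}$ (using the full-rank formula $B^{\ominus}=K_n^{-1}B^{-1}K_n^{-1}$, valid here since $\psi^{-1}$ preserves rank and $A^*A+\delta I$ is invertible), and the trailing $A^*$ into $X^*$. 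Collapsing the product via $K_n^{-1}(X^*K_nX+\delta K_n^{-1})^{-1}=\bigl((X^*K_nX+\delta K_n^{-1})K_n\bigr)^{-1}=(X^*K_nXK_n+\delta I)^{-1}$ yields $X^\oplus=\lim_{\delta\downarrow 0}(X^*K_nXK_n+\delta I)^{-1}X^*$; premultiplying by $J_n^*$ and postmultiplying by $J_n$ (constant matrices, so the limit passes outside) gives the second formula. Along the way I would record that $X^*K_nXK_n+\delta I=(X^*K_nX+\delta K_n^{-1})K_n$ is a product of invertible matrices for every $\delta>0$, the first factor being $\psi^{-1}$ of the positive definite matrix $A^*A+\delta I$, so the inverses in the formula genuinely exist.

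The only place where real care is needed is the first step: recognising that the Cholesky factor of $K_n$ conjugates the twisted product into an honest matrix ring while respecting the adjoint, since once that $*$-isomorphism is in hand the rest is transport of structure together with a standard limit formula. The main place one could slip is the bookkeeping in the last paragraph --- keeping the factors $K_n=k_nk_n^*$, $k_n^{-1}$ and $(k_n^{-1})^*$ on the correct sides when reducing the transported Tikhonov expression to the clean $\delta I$ form, and confirming the perturbed matrices are invertible so that the limit is of genuine inverses and not of pseudoinverses.
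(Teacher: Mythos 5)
Your proof is correct and rests on the same two ingredients as the paper --- the factorisation $K_n=k_nk_n^*$ and the Tikhonov limit representation of the Moore--Penrose inverse --- but you justify the first formula by a genuinely different, and cleaner, route. The paper merely asserts that $(k_n^{-1})^*(k_n^*Xk_n)^+k_n^{-1}$ satisfies the Penrose conditions under $\circ$ and defers the term-by-term verification to the later non-square theorem, where the four conditions are checked by direct computation with $J_mJ_m^*=k_mk_m^*$. You instead observe that $\psi(X)=k_n^*Xk_n$ is a $*$-ring isomorphism from $(M_n,+,\circ)$ onto the ordinary matrix ring, so Moore--Penrose inverses transport automatically; this makes the argument self-contained, explains why the Cholesky factor appears (it trivialises $K_n$), and records the fact, left implicit in the paper, that $X\mapsto X^*$ is an involution for $\circ$ (since $K_n^*=K_n$), without which the extended Penrose conditions would not be well posed. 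The derivation of the limit formula is essentially the same algebra in both versions --- the paper manipulates $(k_n^*Xk_n)^+$ directly and then conjugates by $(k_n^{-1})^*$ and $k_n^{-1}$, whereas you transport the whole Tikhonov identity through $\psi^{-1}$ --- and your closing remark that $X^*K_nX+\delta K_n^{-1}$ is congruent to the positive definite matrix $A^*A+\delta I$, hence invertible for every $\delta>0$, plugs a small gap that the paper leaves unaddressed.
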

\begin{proof}
Expanding $K_n^{-1}$ and $K_n$ using the decomposition $K_n = k_nk_n^*$, we find that the matrix
\[(k_n^{-1})^*(k_n^* X k_n)^+ k_n^{-1}\]
will satisfy the requirements for the Moore-Penrose inverse. This is done in full in the more general non-square case below.

As a Moore-Penrose inverse can be expressed as the limit of a perturbed solution (see, for example, \cite{Nair1997}), we also have the following useful formula
\[\begin{split}
(k_n^* X k_n)^+ &= \lim_{\delta\downarrow 0}(k_n^* X^* K_n X k_n+ \delta I)^{-1}k_n^* X^* k_n\\
&= \lim_{\delta\downarrow 0}(k_n^* X^* K_n X k_n+ \delta k_n^*(k_n^{-1})^* k_n^{-1}k_n)^{-1}k_n^* X^* k_n\\
&= \lim_{\delta\downarrow 0}k_n^{-1}(X^* K_n X+ \delta K_n^{-1})^{-1}X^* k_n\\
\end{split}\]
and so
\[\begin{split}
(k_n^{-1})^*(k_n^* X k_n)^+ k_n^{-1} &= \lim_{\delta\downarrow 0}K_n^{-1}(X^* K_n X + \delta K_n^{-1})^{-1}X^* \\
&= \lim_{\delta\downarrow 0}(X^* K_n XK_n + \delta I)^{-1}X^*.\end{split}\]
\end{proof}

\section{Non-square matrices}
This methodology can be applied to give explicit results for Moore-Penrose inverses for other types of matrices. We now consider the case for real or complex $\tilde X$, where $\tilde X$ has all row and column sums equal to zero but we do not assume that $\tilde X$ is square.

Using the same logic as above, we can write $\tilde X$ as 
\[\tilde X = J_m^*XJ_n\]
for some $m$ and $n$, where $X$ will now be an $m\times n$ matrix.  We again define $K_m = J_mJ_m^* = k_mk_m^*$ for $k_m$ upper triangular and invertible.

\begin{thm}
 For $\tilde X = J_m^* X J_n$, $\tilde X^+$ is given by
\[\tilde X^+ = J_n^*(k_n^{-1})^*(k_m^* X k_n)^+ k_m^{-1}J_m.\]
\end{thm}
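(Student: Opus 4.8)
The plan is to verify directly that the matrix $Z := J_n^*(k_n^{-1})^*(k_m^* X k_n)^+ k_m^{-1}J_m$ satisfies the four Moore--Penrose axioms relative to $\tilde X = J_m^* X J_n$, and then to invoke the uniqueness of the Moore--Penrose inverse (recalled in Section 2) to conclude $Z = \tilde X^+$. Since the non-square matrices do not form a ring, there is no isomorphism to transport the result through, as in the square case; everything is checked by hand. Write $Y := k_m^* X k_n$, an $m\times n$ matrix, so that $X = (k_m^{-1})^* Y k_n^{-1}$, $\tilde X = J_m^*(k_m^{-1})^* Y k_n^{-1} J_n$, and $Z = J_n^*(k_n^{-1})^* Y^+ k_m^{-1} J_m$, where $Y^+$ is the ordinary Moore--Penrose inverse of $Y$. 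The one computational fact doing all the work is the telescoping identity $k_\ell^{-1} J_\ell J_\ell^* (k_\ell^{-1})^* = k_\ell^{-1} K_\ell (k_\ell^{-1})^* = k_\ell^{-1}(k_\ell k_\ell^*)(k_\ell^*)^{-1} = I_\ell$, valid for $\ell = m$ and $\ell = n$, which follows immediately from $K_\ell = J_\ell J_\ell^* = k_\ell k_\ell^*$.

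First I would form $\tilde X Z$: the factor $k_n^{-1} J_n J_n^*(k_n^{-1})^*$ appearing in the middle collapses to $I_n$, leaving $\tilde X Z = J_m^*(k_m^{-1})^* (Y Y^+) k_m^{-1} J_m$, i.e. $J_m^* B (Y Y^+) B^* J_m$ with $B := (k_m^{-1})^*$. Since $Y Y^+$ is Hermitian (a Moore--Penrose axiom for $Y$), so is $B(YY^+)B^*$, and hence $(\tilde X Z)^* = \tilde X Z$. Symmetrically, $Z\tilde X = J_n^* (k_n^{-1})^* (Y^+ Y) k_n^{-1} J_n$ after the $\ell = m$ telescoping, and $Y^+ Y$ Hermitian gives $(Z\tilde X)^* = Z\tilde X$. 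For the remaining two axioms, $\tilde X Z \tilde X$ contains a sandwiched copy of $k_m^{-1} J_m J_m^*(k_m^{-1})^* = I_m$, so it reduces to $J_m^*(k_m^{-1})^*(Y Y^+ Y)k_n^{-1}J_n = J_m^*(k_m^{-1})^* Y k_n^{-1} J_n = \tilde X$; and in the same way $Z\tilde X Z$ reduces to $Z$ via $Y^+ Y Y^+ = Y^+$.

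Finally I would observe that $Z$ automatically has all row and column sums zero: $J_n \mathbf{1} = \mathbf{1} - \mathbf{1} = 0$ and $\mathbf{1}^* J_m^* = 0$, so $Z = J_n^*(\cdots)J_m$ is annihilated by $\mathbf{1}$ on both sides; thus the Moore--Penrose inverse of a matrix with vanishing row and column sums again has vanishing row and column sums, and the formula specializes to the earlier square results when $m = n$. I do not anticipate a genuine obstacle here; the only care required is bookkeeping -- keeping the subscripts $m$ and $n$ attached to the correct $J$ and $k$ factors, and applying the Hermitian-symmetry axioms to the bare products $YY^+$ and $Y^+Y$ rather than to the full conjugated expressions. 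Independence of $Z$ from the particular choice of square roots $k_m, k_n$ (any matrices with $K_m = k_m k_m^*$ and $K_n = k_n k_n^*$) then comes for free from the uniqueness of $\tilde X^+$.
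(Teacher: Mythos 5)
Your proposal is correct and follows essentially the same route as the paper: a direct verification of the four Moore--Penrose axioms, with the identity $J_\ell J_\ell^* = K_\ell = k_\ell k_\ell^*$ doing the telescoping and the Hermitian symmetry of $(k_m^* X k_n)(k_m^* X k_n)^+$ and $(k_m^* X k_n)^+(k_m^* X k_n)$ handling the two symmetry conditions. The extra observations (that $Z$ again has zero row and column sums, and that the formula is independent of the choice of $k_m,k_n$ by uniqueness) are correct but not needed for the theorem.
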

\begin{proof}
Inspection verifies that all these matrices are conformable so we need only verify that this formula satisfies the assumptions for the Moore-Penrose inverse.
\[\begin{split}
\tilde X \tilde X^+ \tilde X&=J_m^*XJ_n J_n^*(k_n^{-1})^*(k_m^* X k_n)^+ k_m^{-1}J_m J_m^*XJ_n\\
&=J_m^*Xk_n k_n^*(k_n^{-1})^*(k_m^* X k_n)^+ k_m^{-1}k_m k_m^*XJ_n\\
&=J_m^*(k_m^{-1})^*(k_m^*Xk_n) (k_m^* X k_n)^+ (k_m^*Xk_n)k_n^{-1}J_n\\
&=J_m^*XJ_n\\
\tilde X^+\tilde X\tilde X^+&=J_n^*(k_n^{-1})^*(k_m^* X k_n)^+ k_m^{-1}J_mJ_m^*XJ_nJ_n^*(k_n^{-1})^*(k_m^* X k_n)^+ k_m^{-1}J_m\\
&=J_n^*(k_n^{-1})^*(k_m^* X k_n)^+ k_m^{-1}J_m\\
(\tilde X \tilde X^+)^*&=(J_m^*XJ_n J_n^*(k_n^{-1})^*(k_m^* X k_n)^+ k_m^{-1}J_m)^*\\
&=(J_m^*Xk_n(k_m^* X k_n)^+ k_m^{-1}J_m)^*\\
&=(J_m^*(k_m^{-1})^*(k_m^*Xk_n)(k_m^* X k_n)^+ k_m^{-1}J_m)^*\\
&=J_m^*(k_m^{-1})^*(k_m^*Xk_n)(k_m^* X k_n)^+ k_m^{-1}J_m
\end{split}\]
and the final condition follows as the third. Therefore, the conditions for the Moore-Penrose inverse are satisfied.
\end{proof}

We can also deal with (possibly non-square) matrices where only the row sums are equal to zero (or equivalently, where only the column sums are equal to zero). In this case we have the representation
\[\tilde X = X J_n\]
with $J_n$ as before. This gives the following.
\begin{thm}\label{thm:rowsonly}
Let $\tilde X = X J_n$, with $X$ a left-invertible matrix. Then the Moore-Penrose inverse of $\tilde X$ is given by 
\[\tilde X^+ = J_n^* K_n^{-1} X^+.\]
\end{thm}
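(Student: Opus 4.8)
The plan is to verify directly that the candidate $Y := J_n^* K_n^{-1} X^+$ satisfies the four defining equations of the Moore--Penrose inverse of $\tilde X = X J_n$; this is the same verification strategy used above for the square rank-deficient case and for the two-sided non-square case, but the left-invertibility hypothesis makes the computation much shorter. First I would record conformability: if $X$ is $m\times n$, then $\tilde X$ is $m\times(n+1)$, $X^+$ is $n\times m$, and $Y = J_n^* K_n^{-1} X^+$ is $(n+1)\times m$, which is the correct shape for a pseudoinverse of $\tilde X$.

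The first key step is to simplify $\tilde X Y$. Using $J_n J_n^* = K_n$,
\[\tilde X Y = X J_n J_n^* K_n^{-1} X^+ = X K_n K_n^{-1} X^+ = X X^+,\]
the orthogonal projector onto the range of $X$. Since $XX^+$ is Hermitian, $(\tilde X Y)^* = \tilde X Y$; and $\tilde X Y\tilde X = X X^+ X J_n = X J_n = \tilde X$, while $Y\tilde X Y = J_n^* K_n^{-1}(X^+ X X^+) = J_n^* K_n^{-1} X^+ = Y$. So three of the four conditions follow from this single identity, none of them using left-invertibility.

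The remaining condition, symmetry of $Y\tilde X$, is where the hypothesis enters. Left-invertibility of $X$ is equivalent to $X$ having full column rank, hence to $X^+ X = I_n$; therefore
\[Y\tilde X = J_n^* K_n^{-1} X^+ X J_n = J_n^* K_n^{-1} J_n = I_{n+1} - \frac{1}{n+1}\,\mathbf{1}_{(n+1)\times(n+1)},\]
the last equality being the identity-element computation established earlier. This matrix is visibly Hermitian, so $(Y\tilde X)^* = Y\tilde X$. All four Moore--Penrose conditions then hold, and by uniqueness $\tilde X^+ = Y = J_n^* K_n^{-1} X^+$.

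I do not expect a genuine obstacle; the only point worth isolating is that left-invertibility is exactly what collapses $Y\tilde X$ to the symmetric ring identity $J_n^* K_n^{-1} J_n$. Without full column rank, $X^+X$ need not equal $I_n$, and one would be forced back to a Cholesky-type symmetrisation of the kind used in the earlier non-square theorem. I would also add the remark that the resulting asymmetry --- $\tilde X^+\tilde X = I_{n+1}-\frac{1}{n+1}\mathbf{1}_{(n+1)\times(n+1)}$ is the ring identity while $\tilde X\tilde X^+ = XX^+$ is merely a projector onto the range of $X$ --- is the expected behaviour, since forcing only the row sums (not the column sums) to vanish breaks the left--right symmetry present in the $S_{n+1}$ case.
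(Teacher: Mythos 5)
Your proposal is correct and follows essentially the same route as the paper: a direct verification of the four Moore--Penrose conditions, using $J_nJ_n^* = K_n$ to collapse $\tilde X Y$ to $XX^+$ and the left-invertibility identity $X^+X = I$ to collapse $Y\tilde X$ to the symmetric matrix $J_n^* K_n^{-1}J_n$. Your added observation that left-invertibility is needed only for the symmetry of $Y\tilde X$ (the other three conditions holding for arbitrary $X$) is a nice refinement, but it is not a different method.
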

\begin{proof}
As $X$ is left-invertible, $X^+X=I$. Therefore
\[\begin{split}
\tilde X (J_n^* K_n^{-1} X^+) &= XJ_n J_n^* K_n^{-1}X^+\\&=XX^+\\
(J_n^* K_n^{-1} X^+)\tilde X &=J_n^* K_n^{-1} X^+XJ_n\\&=J_n^* K_n^{-1}J_n\end{split}\]
both of which are symmetric. It is clear that 
\[\tilde X J_n^* K_n^{-1} X^+ \tilde X =X J_n J_n^* K_n^{-1} X^+ X J_n= XJ_n =\tilde X\] 
and 
\[J_n^* K_n^{-1} X^+ \tilde X J_n^* K_n^{-1} X^+=J_n^* K_n^{-1} X^+ X J_n J_n^* K_n^{-1} X^+ = J_n^* K_n^{-1} X^+.\] Hence $J_n^* K_n^{-1} X^+ $ satisifes all the requirements for the Moore-Penrose inverse of $\tilde X$.
\end{proof}

\begin{thm}
Let $\tilde X = J_n^*X$, with $X$ a right-invertible matrix. Then the Moore-Penrose inverse of $\tilde X$ is given by 
\[\tilde X^+ = X^+K_n^{-1}J_n.\]
\end{thm}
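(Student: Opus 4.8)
The plan is to proceed exactly as in the proof of Theorem~\ref{thm:rowsonly}, exploiting the left/right symmetry of the setup: the present statement is the mirror image, with $X$ now right-invertible and $J_n^*$ acting on the left instead of $J_n$ on the right. I would begin by recording the two auxiliary products $\tilde X^+\tilde X$ and $\tilde X\tilde X^+$, since all four Moore--Penrose conditions for $\tilde X^+ = X^+K_n^{-1}J_n$ can be read off once these are in hand.

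First I would compute $\tilde X\tilde X^+ = J_n^* X X^+ K_n^{-1} J_n$. The key fact is that right-invertibility of $X$ forces $X X^+ = I$ (the Moore--Penrose inverse of a right-invertible matrix is a genuine right inverse), so this collapses to $\tilde X\tilde X^+ = J_n^* K_n^{-1} J_n$, which is self-adjoint because $K_n^{-1}$ is symmetric (being the inverse of $K_n = J_n J_n^*$); this disposes of the third condition. Next I would compute $\tilde X^+\tilde X = X^+ K_n^{-1} J_n J_n^* X = X^+ K_n^{-1} K_n X = X^+ X$, using $J_n J_n^* = K_n$; this is self-adjoint because $X^+$ is the true Moore--Penrose inverse of $X$, disposing of the fourth condition. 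The remaining two conditions are then immediate bookkeeping with $J_n J_n^* = K_n$: $\tilde X\tilde X^+\tilde X = J_n^* K_n^{-1} J_n J_n^* X = J_n^* K_n^{-1} K_n X = J_n^* X = \tilde X$, and $\tilde X^+\tilde X\tilde X^+ = X^+ X X^+ K_n^{-1} J_n = X^+ K_n^{-1} J_n = \tilde X^+$, the last step using $X^+ X X^+ = X^+$. Uniqueness of the Moore--Penrose inverse then finishes the argument.

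I do not expect any real obstacle. The only point requiring a moment's attention is confirming that it is \emph{right}-invertibility of $X$ (rather than left-invertibility) that yields $X X^+ = I$, so that the collapse $J_n^* X X^+ K_n^{-1} J_n = J_n^* K_n^{-1} J_n$ is legitimate; this is precisely dual to the hypothesis used in Theorem~\ref{thm:rowsonly}, and everything else is routine manipulation with the identity $K_n = J_n J_n^*$.
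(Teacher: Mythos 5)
Your proof is correct and is exactly the argument the paper intends: the paper's own proof of this theorem is just the one-line remark that it follows as in Theorem~\ref{thm:rowsonly}, and your verification of the four Moore--Penrose conditions is the precise mirror image of that proof, with right-invertibility giving $XX^+=I$ in place of $X^+X=I$. Nothing is missing.
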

\begin{proof}This follows as in Theorem \ref{thm:rowsonly}.\end{proof}

\section{Extra Zero Rows}
 A particular example of interest is for $\tilde X\in S_{n+1}$, where $\tilde X$ is of rank $m$ and contains $n-m$ rows and columns which are invariably zero. An example of such a matrix with $n=3$, $m=2$ is
\[\left[\begin{array}{cccc} 1 & 0 &1 & -2\\3&0&-1&-2\\0&0&0&0\\-4&0&0&3\end{array}\right].\]

This can be written through the linear transformation
\[\left[\begin{array}{ccc} 1 &0 \\0&1\\0&0\\-1&-1\end{array}\right]\left[\begin{array}{cc} 1 &1 \\3&-1\end{array}\right]\left[\begin{array}{cccc} 1 &0 & 0&-1\\0&0&1&-1\end{array}\right]= J_{2,\{3\}}^*\left[\begin{array}{cc} 1 &1 \\3&-1\end{array}\right]J_{2,\{2\}},\]
where $J_{2,\{3\}}$ refers to the $J_2$ matrix as defined above, with an additional column of zeros inserted into the third place.

For any $J_{m, \mathbf{a}}$, where $\mathbf{a}$ lists the columns where zeros are inserted, 
\[J_{m,\mathbf{a}} J_{m, \mathbf{a}}^* = K_m.\]

\begin{thm}
For a fixed $\mathbf{a}$ with $m$ entries, $m<n$, let $S_{n+1}^\mathbf{a}$ be the set of $(n+1)\times(n+1)$ matrices with all row and column sums zero, and with zeros in those rows and columns listed in $\mathbf{a}$. Then there exists an isomorphism
\[\phi: (M_{n-m}, +, \circ)\rightarrow (S_{n+1}^\mathbf{a}, +, \times), X\mapsto J_{m,\mathbf{a}}^* X J_{m, \mathbf{a}}\]
where $X\circ Y = X K_m Y$ as before.
\end{thm}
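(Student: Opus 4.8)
The plan is to re-run the proofs of Theorems~\ref{thm:biject} and~\ref{thm:Isom} with $J_n$ replaced throughout by $J_{m,\mathbf{a}}$; the only genuinely new ingredient is the bookkeeping that pins the image of $\phi$ down to $S_{n+1}^\mathbf{a}$ rather than all of $S_{n+1}$. The structural fact that makes this work is that $J_{m,\mathbf{a}}$ factors as $J_{m,\mathbf{a}} = J_m Z$, where $Z$ is the ``zero-insertion'' matrix obtained from an identity matrix by inserting columns of zeros in the positions listed in $\mathbf{a}$. The nonzero columns of $Z$ are distinct standard basis vectors, so $Z$ has orthonormal rows, $ZZ^* = I$, and hence $J_{m,\mathbf{a}}J_{m,\mathbf{a}}^* = J_m ZZ^* J_m^* = J_m J_m^* = K_m$, which recovers the identity recorded just before the theorem; moreover $J_{m,\mathbf{a}}$ is right invertible, being a product of right-invertible matrices.

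First I would verify that $\phi$ maps into $S_{n+1}^\mathbf{a}$ and is injective. Each row of $J_{m,\mathbf{a}}$ is a row of $J_m=[I_m|-\mathbf{1}]$ with zeros inserted, so its entries sum to zero and $J_{m,\mathbf{a}}\mathbf{1} = 0$; this forces $\mathbf{1}^*\phi(X) = \mathbf{1}^* J_{m,\mathbf{a}}^* X J_{m,\mathbf{a}} = 0$ and, symmetrically, $\phi(X)\mathbf{1} = 0$, so all row and column sums of $\phi(X)$ vanish. For $k\in\mathbf{a}$ the $k$th column of $J_{m,\mathbf{a}}$ is zero, i.e.\ $J_{m,\mathbf{a}}e_k = 0$, so $e_k^*\phi(X) = (J_{m,\mathbf{a}}e_k)^* X J_{m,\mathbf{a}} = 0$ and likewise $\phi(X)e_k = 0$; hence $\phi(X)\in S_{n+1}^\mathbf{a}$. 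Injectivity is then immediate from right-invertibility of $J_{m,\mathbf{a}}$: if $R$ is a right inverse, then $R^*\phi(X)R = X$.

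For surjectivity, given $\tilde X\in S_{n+1}^\mathbf{a}$ I would delete the (identically zero) rows and columns indexed by $\mathbf{a}$; as these were zero, the remaining row and column sums are unchanged, so the resulting square matrix $\hat X$ still has all row and column sums equal to zero, and by Theorem~\ref{thm:biject} it has a unique representation $\hat X = J_m^* X J_m$ with $X\in M_m$. Re-inserting the zero rows and columns is exactly conjugation by $Z$, that is $Z^*\hat X Z = \tilde X$ (both sides agree entry by entry, using that $\tilde X$ already carries zeros in the slots indexed by $\mathbf{a}$), and therefore
\[ \tilde X = Z^* J_m^* X J_m Z = (J_m Z)^* X (J_m Z) = J_{m,\mathbf{a}}^* X J_{m,\mathbf{a}} = \phi(X). \]

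Finally, that $\phi$ intertwines the operations is the one-line computation of Theorem~\ref{thm:Isom}: $\phi(X+Y) = \phi(X)+\phi(Y)$ is clear, and $\phi(X)\phi(Y) = J_{m,\mathbf{a}}^* X J_{m,\mathbf{a}} J_{m,\mathbf{a}}^* Y J_{m,\mathbf{a}} = J_{m,\mathbf{a}}^*(X K_m Y) J_{m,\mathbf{a}} = \phi(X\circ Y)$ using $J_{m,\mathbf{a}}J_{m,\mathbf{a}}^* = K_m$; in particular $S_{n+1}^\mathbf{a}$ is closed under ordinary matrix multiplication, hence is a ring, and $\phi$ is a ring isomorphism. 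Equivalently one may present this as a composition $\phi = \psi\circ\phi_0$, where $\phi_0\colon(M_m,+,\circ)\to(S_{m+1},+,\times)$ is the isomorphism of Theorems~\ref{thm:biject} and~\ref{thm:Isom} and $\psi\colon\hat X\mapsto Z^*\hat X Z$ is a ring isomorphism $(S_{m+1},+,\times)\to(S_{n+1}^\mathbf{a},+,\times)$ (multiplicative because $ZZ^* = I$, bijective by the deletion/insertion step), so that $\phi$ is a composite of isomorphisms. The one place I expect to need genuine care is the surjectivity step: checking cleanly that the delete-then-reinsert operation is realised by conjugation by $Z$ and is compatible with the factorization $J_{m,\mathbf{a}} = J_m Z$. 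Everything else simply transcribes material already proved.
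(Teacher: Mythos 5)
Your proof is correct and follows the route the paper itself intends: the paper's entire proof is the sentence ``the proof of this follows exactly as for Theorem~\ref{thm:Isom},'' and your argument is precisely that reduction, carried out in full. The factorization $J_{m,\mathbf{a}} = J_m Z$ with $ZZ^* = I$ (which also yields $J_{m,\mathbf{a}}J_{m,\mathbf{a}}^* = K_m$, asserted without proof in the paper) and the delete-then-reinsert surjectivity argument supply exactly the details --- image contained in $S_{n+1}^{\mathbf{a}}$ and bijectivity onto it --- that the paper's one-line proof glosses over.
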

\begin{proof}
The proof of this follows exactly as for Theorem \ref{thm:Isom}.
\end{proof}

\begin{thm}
The Moore-Penrose inverse of $\tilde X= J_{m,\mathbf{b}}^* X J_{m, \mathbf{a}}$, where $\tilde X$ is of rank $m$, is given by
\[\tilde X^+ = (J_{m,\mathbf{b}}^* X J_{m, \mathbf{a}})^+ = J_{m,\mathbf{a}}^* K_m^{-1}X^{-1}K_m^{-1} J_{m, \mathbf{b}}.\]
\end{thm}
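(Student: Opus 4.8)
The plan is to establish the formula by the same direct verification used for the non-square theorems above, after first checking that the expression is well defined. Since $J_{m,\mathbf{a}}$ and $J_{m,\mathbf{b}}$ each contain $I_m$ as a submatrix, both have full row rank $m$; hence $\operatorname{rank}\tilde X=\operatorname{rank}(J_{m,\mathbf{b}}^*XJ_{m,\mathbf{a}})=\operatorname{rank}X$, and the hypothesis $\operatorname{rank}\tilde X=m$ forces the square matrix $X$ to be invertible, so that the $X^{-1}$ in the formula makes sense. I would also record that $K_m=J_{m,\mathbf{a}}J_{m,\mathbf{a}}^*=J_{m,\mathbf{b}}J_{m,\mathbf{b}}^*$ is real, symmetric (hence Hermitian) and positive definite, whence $K_m^{-1}$ is also Hermitian.

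Set $Y:=J_{m,\mathbf{a}}^*K_m^{-1}X^{-1}K_m^{-1}J_{m,\mathbf{b}}$. The only algebraic fact needed is $J_{m,\mathbf{a}}J_{m,\mathbf{a}}^*=J_{m,\mathbf{b}}J_{m,\mathbf{b}}^*=K_m$, which lets one collapse every adjacent pair $J_{m,\mathbf{a}}J_{m,\mathbf{a}}^*$ or $J_{m,\mathbf{b}}J_{m,\mathbf{b}}^*$ to $K_m$ and cancel it against a neighbouring $K_m^{-1}$. Carrying this out gives $\tilde X Y=J_{m,\mathbf{b}}^*K_m^{-1}J_{m,\mathbf{b}}$ and $Y\tilde X=J_{m,\mathbf{a}}^*K_m^{-1}J_{m,\mathbf{a}}$; both are Hermitian because $K_m^{-1}$ is, which disposes of the two symmetry axioms. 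Multiplying $\tilde X Y$ on the right by $\tilde X$ and $Y\tilde X$ on the left by $Y$, and absorbing one further copy of $K_m$, one obtains $\tilde X Y\tilde X=\tilde X$ and $Y\tilde X Y=Y$, completing the verification.

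A cleaner high-level route is to specialise the non-square formula already proved. That proof used only the relations $JJ^{*}=K$ together with conformability, so it applies unchanged with $J_{m,\mathbf{b}}$ and $J_{m,\mathbf{a}}$ in the roles of $J_{m}$ and $J_{n}$, yielding $\tilde X^{+}=J_{m,\mathbf{a}}^{*}(k_m^{-1})^{*}(k_m^{*}Xk_m)^{+}k_m^{-1}J_{m,\mathbf{b}}$, where $K_m=k_mk_m^{*}$. Because $X$ is invertible, $(k_m^{*}Xk_m)^{+}=(k_m^{*}Xk_m)^{-1}=k_m^{-1}X^{-1}(k_m^{-1})^{*}$, and substituting this and using $(k_m^{-1})^{*}k_m^{-1}=(k_mk_m^{*})^{-1}=K_m^{-1}$ on each side recovers exactly $J_{m,\mathbf{a}}^{*}K_m^{-1}X^{-1}K_m^{-1}J_{m,\mathbf{b}}$.

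I do not anticipate a genuine obstacle: the computation is the same bookkeeping as in the square rank-$n$ case and the general non-square case. The two points deserving explicit mention are the preliminary reduction — that the rank-$m$ hypothesis is precisely what makes $X$ invertible, so a true inverse rather than a pseudoinverse appears — and, in the complex setting, the observation that $K_m$ (and hence $K_m^{-1}$) is real and symmetric, so the conjugate-transpose conditions reduce to the real ones.
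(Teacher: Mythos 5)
Your proof is correct and takes essentially the same approach as the paper: you note that the rank-$m$ hypothesis makes $X$ invertible and then verify the four Moore--Penrose axioms directly using $J_{m,\mathbf{a}}J_{m,\mathbf{a}}^*=J_{m,\mathbf{b}}J_{m,\mathbf{b}}^*=K_m$, which is exactly the expansion the paper sketches (your version is in fact more explicit, and your alternative derivation via the Cholesky-based non-square formula is a nice consistency check).
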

\begin{proof}
If
\[\tilde X = J_{m,\mathbf{b}}^* X J_{m, \mathbf{a}}\]
then 
\[\tilde X^* = J_{m,\mathbf{a}}^* X^* J_{m, \mathbf{b}}.\]
We can see that if $\tilde X$ is of rank $m$, so is its equivalent $m\times m$ matrix $X$. Therefore, this has a true inverse. Using the fact that $J_{m,\mathbf{a}} J_{m, \mathbf{a}}^*=J_{m,\mathbf{b}} J_{m, \mathbf{b}}^* = K_m$, we can then expand the definition of the Moore-Penrose inverse to give the desired result.
\end{proof}

\begin{rem}
Note that this theorem does not require that $\tilde X$ is square, as $\mathbf{a}$ and $\mathbf{b}$ can list different numbers of positions where zeros are inserted.
\end{rem}

We can, therefore, establish the following general property for all matrices of this form.
\begin{thm}
For $\tilde X= J_{m,\mathbf{b}}^* X J_{m, \mathbf{a}}$ of rank $m$,
\[\begin{split}
\tilde X^+ \tilde X &= J_{m,\mathbf{a}}^*K_m^{-1}X^{-1}K_m^{-1} J_{m, \mathbf{b}}J_{m,\mathbf{b}}^* X J_{n, \mathbf{a}}\\
&= J_{n,\mathbf{a}}^* K_m^{-1} J_{m, \mathbf{a}}.\end{split}
\]
\end{thm}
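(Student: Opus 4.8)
The plan is to substitute the closed form for $\tilde X^+$ furnished by the previous theorem into the product $\tilde X^+\tilde X$ and then simplify, the only algebraic input being the identities $J_{m,\mathbf{b}}J_{m,\mathbf{b}}^* = J_{m,\mathbf{a}}J_{m,\mathbf{a}}^* = K_m$ recorded just above. In particular, no new analytic ingredient is needed: the result is purely formal.

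First I would note that, since $\tilde X = J_{m,\mathbf{b}}^* X J_{m,\mathbf{a}}$ has rank $m$, its equivalent $m\times m$ matrix $X$ is of full rank and hence genuinely invertible; this is exactly the hypothesis of the previous theorem, so we may write $\tilde X^+ = J_{m,\mathbf{a}}^* K_m^{-1} X^{-1} K_m^{-1} J_{m,\mathbf{b}}$ with no appeal to a limiting argument. The first displayed equality in the statement is then just this substitution, with the $J_{n,\mathbf{a}}$ there read as $J_{m,\mathbf{a}}$.

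Then I would expand
\[\tilde X^+\tilde X = \bigl(J_{m,\mathbf{a}}^* K_m^{-1} X^{-1} K_m^{-1} J_{m,\mathbf{b}}\bigr)\bigl(J_{m,\mathbf{b}}^* X J_{m,\mathbf{a}}\bigr)\]
and collapse the interior block $J_{m,\mathbf{b}}J_{m,\mathbf{b}}^*$ to $K_m$; one copy of $K_m^{-1}$ absorbs it and the remaining $X^{-1}X$ cancels, leaving the second equality $\tilde X^+\tilde X = J_{m,\mathbf{a}}^* K_m^{-1} J_{m,\mathbf{a}}$. Conformability of all the factors follows exactly as in the earlier non-square computations, and I would remark that the resulting matrix is the image under $\phi$ of $K_m^{-1}$ — the multiplicative identity of the relevant $S^{\mathbf{a}}$-type subring — which is precisely why $\tilde X^+$ behaves as a true one-sided inverse of $\tilde X$.

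There is no genuine obstacle here: once the formula for $\tilde X^+$ is invoked the argument is a one-line cancellation. The only points needing care are clerical — keeping the index sets $\mathbf{a}$ and $\mathbf{b}$ distinct (the $\mathbf{b}$-blocks are the ones that vanish, the $\mathbf{a}$-blocks survive, so the answer is genuinely asymmetric in the two), and making sure the full-rank form of $\tilde X^+$ is used rather than the general Cholesky form. In the square case $\mathbf{a}=\mathbf{b}$ one can alternatively read the identity off abstractly: under $\phi^{-1}$ the product $\tilde X^+\tilde X$ corresponds to $X^\ominus\circ X$, which equals $K_m^{-1}X^{-1}K_m^{-1}\,K_m\,X = K_m^{-1}$ in $(M_m,+,\circ)$, and applying $\phi$ to $K_m^{-1}$ returns $J_{m,\mathbf{a}}^* K_m^{-1} J_{m,\mathbf{a}}$.
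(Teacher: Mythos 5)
Your proposal is correct and is essentially the paper's own argument: the paper offers no separate proof for this theorem, the displayed chain of equalities being the proof itself, and it amounts to exactly your substitution of $\tilde X^+ = J_{m,\mathbf{a}}^* K_m^{-1} X^{-1} K_m^{-1} J_{m,\mathbf{b}}$ followed by the collapse $J_{m,\mathbf{b}}J_{m,\mathbf{b}}^* = K_m$ and the cancellations $K_m^{-1}K_m = I$, $X^{-1}X = I$. You are also right that the occurrences of $J_{n,\mathbf{a}}$ in the statement are typographical slips for $J_{m,\mathbf{a}}$.
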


\section{A final fact}
A final useful result is that, for matrices of this form, if $\tilde X=J_{m,\mathbf{b}}^* X J_{m, \mathbf{a}}$ is of rank $m$, then
\[\begin{split}
\tilde X^+ \tilde X &= J_{m,\mathbf{a}}^* K_m^{-1} J_{m, \mathbf{a}}\\
&= J_{m,\mathbf{a}}^*\left[I_{m}-\frac{1}{m+1}\mathbf{1}_{m\times m}\right]J_{m,\mathbf{a}}.\end{split}\]
where $\mathbf{1}_{m\times m}$ again refers to a matrix with all entries one.

\begin{thm}
Let $M$ be any matrix with $m$ rows, and $\tilde X$ any rank $m$ square matrix of the form $\tilde X = J_{m,\mathbf{b}}^* X J_{m, \mathbf{a}}$. If $\tilde M = J_{m,\mathbf{a}}^*M$, then \[\tilde X^+\tilde X \tilde M = \tilde M.\]
\end{thm}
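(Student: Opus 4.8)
The plan is to reduce the claim to a one-line substitution, using the explicit formula for $\tilde X^+\tilde X$ established in the theorem immediately preceding this one, together with the defining identity $J_{m,\mathbf{a}}J_{m,\mathbf{a}}^*=K_m$.

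First I would invoke the rank hypothesis: since $\tilde X = J_{m,\mathbf{b}}^* X J_{m,\mathbf{a}}$ has rank $m$, the associated $m\times m$ matrix $X$ has rank $m$ and hence is invertible, so the preceding theorem applies and gives
\[\tilde X^+\tilde X = J_{m,\mathbf{a}}^* K_m^{-1} J_{m,\mathbf{a}}.\]
Then I would substitute $\tilde M = J_{m,\mathbf{a}}^* M$ and compute
\[\tilde X^+\tilde X\tilde M = J_{m,\mathbf{a}}^* K_m^{-1} J_{m,\mathbf{a}} J_{m,\mathbf{a}}^* M,\]
checking along the way that all products are conformable: $M$ has $m$ rows and $J_{m,\mathbf{a}}$ has $m$ rows, so each juxtaposition is well defined. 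Applying $J_{m,\mathbf{a}}J_{m,\mathbf{a}}^* = K_m$ collapses the middle factor, leaving
\[\tilde X^+\tilde X\tilde M = J_{m,\mathbf{a}}^* K_m^{-1} K_m M = J_{m,\mathbf{a}}^* M = \tilde M,\]
which is exactly the assertion.

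The only point requiring any care — and it is not really an obstacle — is being explicit that the rank-$m$ assumption is what licenses the clean formula $\tilde X^+\tilde X = J_{m,\mathbf{a}}^* K_m^{-1} J_{m,\mathbf{a}}$; for matrices of incomplete rank one would instead need the more elaborate expression from the earlier non-square analysis, and the argument would not close so neatly. No limiting procedure, Cholesky factor, or verification of the four Moore--Penrose conditions is needed here, since all of that work was already absorbed into establishing the formula for $\tilde X^+\tilde X$.
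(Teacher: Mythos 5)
Your proposal is correct and follows exactly the paper's own argument: substitute the formula $\tilde X^+\tilde X = J_{m,\mathbf{a}}^* K_m^{-1} J_{m,\mathbf{a}}$ from the preceding theorem, apply $J_{m,\mathbf{a}}J_{m,\mathbf{a}}^* = K_m$, and cancel. The extra remarks on conformability and on where the rank-$m$ hypothesis is used are sensible but do not change the route.
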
 
\begin{proof}
We have
\[\tilde X^+\tilde X \tilde M = J_{m,\mathbf{a}}^* K_m^{-1} J_{m, \mathbf{a}}J_{m,\mathbf{a}}^*M = J_{m,\mathbf{a}}^* K_m^{-1} K_m M = \tilde M.\]
\end{proof}
Considering this form of $\tilde M$, it is clear that these are the matrices with column sums of zero and with zeros in the rows denoted by $\mathbf{a}$, that is, corresponding to the columns of zeros in $\tilde X$. 

\begin{thm}
The vectors of the form $\tilde M = J_{m, \mathbf{a}} M$ form an $m$-dimensional subspace spanning the range of the projection operator $\tilde X^+ \tilde X$.
\end{thm}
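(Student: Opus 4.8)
The plan is to recognise the claimed set as the fixed-point set of the projection $\tilde X^+\tilde X$ and then to match dimensions. Write $V := \{\, J_{m,\mathbf a}^* v : v\in F^m\,\}$ for the space of vectors in question, that is, the column space of $J_{m,\mathbf a}^*$. Applying the preceding theorem with $M$ a single column shows that $\tilde X^+\tilde X$ fixes every element of $V$, so $V$ is contained in the fixed-point set of $\tilde X^+\tilde X$. Since $\tilde X^+\tilde X$ is a symmetric idempotent --- one has $(\tilde X^+\tilde X)^*=\tilde X^+\tilde X$ from the Moore-Penrose axioms, and $(\tilde X^+\tilde X)(\tilde X^+\tilde X)=\tilde X^+(\tilde X\tilde X^+\tilde X)=\tilde X^+\tilde X$ --- it is the orthogonal projection onto its own range, so its range is precisely its fixed-point set. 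Hence $V\subseteq\operatorname{range}(\tilde X^+\tilde X)$, and it remains to show that both subspaces have dimension $m$.

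For $V$: being the image of the linear map $v\mapsto J_{m,\mathbf a}^* v$, its dimension is $\operatorname{rank}(J_{m,\mathbf a}^*)=\operatorname{rank}(J_{m,\mathbf a})$. Now $J_m=[I_m\,|\,{-\mathbf 1}]$ has rank $m$ since it contains the block $I_m$, and $J_{m,\mathbf a}$ is obtained from $J_m$ only by inserting columns of zeros, which does not change the rank; thus $\dim V=m$. (Equivalently, $V$ is exactly the set of vectors in $F^{n+1}$ whose entries sum to zero and which vanish in the positions listed by $\mathbf a$, visibly an $m$-dimensional subspace.) For the range: since $\tilde X=\tilde X\tilde X^+\tilde X=\tilde X(\tilde X^+\tilde X)$ we get $\operatorname{rank}(\tilde X)\le\operatorname{rank}(\tilde X^+\tilde X)$, while trivially $\operatorname{rank}(\tilde X^+\tilde X)\le\operatorname{rank}(\tilde X)$; hence $\operatorname{rank}(\tilde X^+\tilde X)=\operatorname{rank}(\tilde X)=m$ by hypothesis. (One could instead read this off the explicit formula $\tilde X^+\tilde X=J_{m,\mathbf a}^* K_m^{-1}J_{m,\mathbf a}$ established above, using that $K_m^{-1}$ is invertible and $J_{m,\mathbf a}$ has full row rank $m$.) An $m$-dimensional subspace contained in another $m$-dimensional subspace must equal it, giving $V=\operatorname{range}(\tilde X^+\tilde X)$, which is the assertion.

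Every step here is routine linear algebra; the only point requiring a little care is the passage from ``$\tilde X^+\tilde X$ fixes $V$'' to ``$V\subseteq\operatorname{range}(\tilde X^+\tilde X)$'', which uses idempotency of $\tilde X^+\tilde X$ so that its range coincides with its fixed-point set, together with getting the two rank counts to agree so that the inclusion can be upgraded to equality. I anticipate no genuine obstacle beyond this bookkeeping.
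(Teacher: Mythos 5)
Your proof is correct and complete. The paper actually states this theorem without any proof, so there is nothing to compare against; your argument --- using the preceding theorem to show $\tilde X^+\tilde X$ fixes $V$, idempotency of $\tilde X^+\tilde X$ to identify its range with its fixed-point set, and the two rank counts ($\dim V=\operatorname{rank}(J_{m,\mathbf a})=m$ and $\operatorname{rank}(\tilde X^+\tilde X)=\operatorname{rank}(\tilde X)=m$) to upgrade the inclusion to equality --- is exactly the natural way to fill that gap, and every step checks out. One minor point: the theorem as printed writes $\tilde M=J_{m,\mathbf a}M$, but to be consistent with the preceding theorem (and for the dimensions to work) it should read $\tilde M=J_{m,\mathbf a}^*M$; you have silently and correctly worked with the column space of $J_{m,\mathbf a}^*$.
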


\section{Conclusion}
We have examined some of the properties of matrices with all row and column sums equal to zero, particularly in relation to their Moore-Penrose inverses. The set of these matrices is isomorphic to the set of matrices of smaller dimension, which easily shows that they form a ring with identity and gives general results for the calculation of the pseudoinverses within this ring. We have shown that the true inverse in this smaller ring, if it exists, corresponds in some sense to the Moore-Penrose inverse for the original matrices. If we consider the subset of matrices with the extra restriction that a certain number of rows and columns are zero, then a further explicit formula for the projection operator $X^+X$ can be obtained, which characterises its range. These results have applications in the theory of Markov chains and other areas.

\bibliographystyle{plain}
\bibliography{General}
\end{document}